\definecolor{lgray}{gray}{0.75}
\newcommand{\pch}{\chi_{\rho}}
\newcommand{\diam}{{\rm diam}}
\newcommand{\qed}{\hfill $\square$ \bigskip}
\newcommand{\mptt}[1]{}
\newtheorem{theorem}{Theorem} 
\newtheorem{lemma}[theorem]{Lemma}
\newtheorem{question}[theorem]{Question}
\begin{document}

\title{\bf An infinite family of subcubic graphs with unbounded packing chromatic number}

\author{
Bo\v{s}tjan Bre\v{s}ar $^{a,b}$  \and Jasmina Ferme $^{c,a}$ 
 }

\date{}

\maketitle

\begin{center}
$^a$ Faculty of Natural Sciences and Mathematics, University of Maribor, Slovenia\\
\medskip

$^b$ Institute of Mathematics, Physics and Mechanics, Ljubljana, Slovenia\\
\medskip

$^c$ Faculty of Education, University of Maribor, Slovenia\\

\end{center}

\begin{abstract}
Recently, Balogh, Kostochka and Liu in [Packing chromatic number of cubic graphs, Discrete Math.~341 (2018) 474--483] answered in negative the question that was posed in several earlier papers whether the packing chromatic number is bounded in the class of graphs with maximum degree $3$. In this note, we present an explicit infinite family of subcubic graphs with unbounded packing chromatic number. 
\end{abstract}

\noindent {\bf Key words:} packing, coloring, packing coloring, diameter, subcubic graphs.

\medskip\noindent
{\bf AMS Subj.\ Class:} 05C70, 05C15, 05C12

\bigskip

Given a graph $G$, the {\em distance} between two vertices $u$ and $v$ in $G$, denoted by $d_G(u,v)$, is the length of a shortest $u,v$-path (we often drop the subscript if the graph $G$ is clear from context). The maximum of $\{d_G(x,y)\,|\, x,y \in V(G)\}$ is called the  {\em diameter} of $G$ and denoted by $\diam(G)$.
An {\em $i$-packing} in $G$, where $i$ is a positive integer, is a subset $W$ of the vertex set of $G$ such that the distance between any two distinct vertices from $W$ is greater than $i$. This concept generalizes the notion of an independent set, which is equivalent to a $1$-packing. The {\em packing chromatic number} of $G$ is the smallest integer $k$ such that the vertex set of $G$ can be partitioned into sets $V_1,\ldots, V_k$, where $V_i$ is an $i$-packing for each $i\in \{1,\ldots, k\}$. This invariant is well defined in any graph $G$ and is denoted by $\pch(G)$. The corresponding mapping $c:V(G)\longrightarrow \{1, \ldots, k\}$ having the property that $c(u)=c(v)=i$ implies $d_G(u, v) > i$ is called a {\em $k$-packing coloring}. 

The concept of packing chromatic number of a graph was introduced a decade ago under the name broadcast chromatic number~\cite{goddard-2008}, and the current name was given in~\cite{bkr-2007}. A number of authors have studied this invariant, cf. a selection of recent papers~\cite{argiroffo-2014,barnaby-2017, bkr-2016, bkrw-2017a,bkrw-2017b,ekstein-2014,finbow-2010,gt-2016,jacobs-2013,lbe-2016,
shao-2015,togni-2014, torres-2015}. In particular, it was shown that the problem of determining the packing chromatic number is computationally (very) hard~\cite{fiala-2010} as its decision version is NP-complete even when restricted to trees.  Already in the seminal paper~\cite{goddard-2008} it was observed that there is no upper bound for the packing chromatic number in the class of graphs with fixed maximum degree $\Delta$ when $\Delta\ge 4$, while the question for subcubic graphs (i.e., the graphs with $\Delta\le 3$) intrigued several authors, see~\cite{goddard-2008,bkr-2016,bkrw-2017a,gt-2016}. In particular, a subcubic graph with packing chromatic number 13 was found in \cite{gt-2016}, and a subcubic graph with packing chromatic number 14 was constructed in~\cite{bkrw-2017a}, but no subcubic graph with bigger packing chromatic number was known. Finally, in~\cite{balogh-2018} the authors proved that the packing chromatic number of subcubic graphs is unbounded. The proof is rather involved and uses the so-called configuration model technique. However, this remarkable proof does not give an explicit construction of a family of subcubic graphs with unbounded packing chromatic number. 

In this note we present a family of subcubic graphs $G_k$ with the property that $\pch(G_k)\ge 2k+9$. The main tool in the proof is to keep the diameter of the graphs in the family under control (i.e., $\diam(G_k)\le 2k+6$), and at the same time a packing coloring of these graphs requires more colors than the diameter. We are able to compute the bounds for the packing chromatic numbers of the graphs $G_k$ by using recursive structure of the family $G_k$ (each graph $G_k$ contains two copies of $G_{k-1}$ as induced subgraphs). 

In the remainder of this note, we present the construction and prove the mentioned bounds for the diameter and the packing chromatic number of the graphs $G_k$.
 The basic building block in the construction is the graph $H$ in Fig.~\ref{fig:graphH}.
Note that $\diam(H)=4$. 

\begin{figure}[h]
\begin{center}
\begin{tikzpicture}
\def\vr{3pt}
\def\len{1}

\foreach \i in {1, 3}{
\coordinate(y_\i) at (\i-1, 2);
\coordinate(z_\i) at (\i+3, 2);}
\foreach \i in {5, 7}{
\coordinate(y_\i) at (\i-5, 0);
\coordinate(z_\i) at (\i-1, 0);}
\foreach \i in {2, 4, 6}{
\coordinate(y_\i) at (1, 2-\i*0.25);
\coordinate(z_\i) at (5, 2-\i*0.25);}
\coordinate (w) at (3, 1);
\draw (y_1)--(y_5)--(y_6)--(y_7)--(y_3)--(y_2)--(y_1);
\draw (y_2)--(y_4)--(y_6);
\draw (z_1)--(z_5)--(z_6)--(z_7)--(z_3)--(z_2)--(z_1);
\draw (z_2)--(z_4)--(z_6);
\draw (y_4)--(w)--(z_4);
\draw (y_3)--(z_1);
\draw (y_5) .. controls (2, -1) and (4, -1) .. (z_7);
\foreach \i in {1, 2, 3, 4, 5, 6, 7}{
\draw(y_\i)[fill=white] circle(\vr);
\draw(z_\i)[fill=white] circle(\vr);
}
\draw(w)[fill=white] circle(\vr);
\draw(y_1)node[left]{$y_1$}; \draw(y_2)node[above]{$y_2$}; \draw(y_3)node[above]{$y_3$}; \draw(y_4)node[left]{$y_4$}; \draw(y_5)node[left]{$y_5$}; \draw(y_6)node[below]{$y_6$}; \draw(y_7)node[right]{$y_7$};
\draw(z_1)node[above]{$z_1$}; \draw(z_2)node[above]{$z_2$}; \draw(z_3)node[right]{$z_3$}; \draw(z_4)node[right]{$z_4$}; \draw(z_5)node[left]{$z_5$}; \draw(z_6)node[below]{$z_6$}; \draw(z_7)node[right]{$z_7$};
\draw(w)node[above]{$w$};

\end{tikzpicture}
\end{center}
\caption{Graph $H$} 
\label{fig:graphH}
\end{figure}


\begin{lemma}
The packing chromatic number of the graph $H$, shown in Fig. \ref{fig:graphH}, is at least $7$. 
\label{lema_pakirno_H}
\end{lemma}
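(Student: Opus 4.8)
The plan is to use the bound $\diam(H)=4$ to restrict the large colors and then a counting argument refined by the structure of $H$. Since two vertices are always at distance at most $4$, any $i$-packing with $i\ge 4$ is a single vertex, so in a packing coloring each of the colors $4,5,6,\dots$ is used at most once. Assume, for contradiction, a packing coloring $c\colon V(H)\to\{1,\dots,6\}$ with classes $V_1,\dots,V_6$. Then $|V_4|+|V_5|+|V_6|\le 3$, whence $|V_1|+|V_2|+|V_3|\ge 12$. I would prove the lemma by establishing the sharper statement that any pairwise disjoint independent set $V_1$, $2$-packing $V_2$ and $3$-packing $V_3$ satisfy $|V_1|+|V_2|+|V_3|\le 11$; then at least four vertices require colors $\ge 4$, which forces $\pch(H)\ge 7$.

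The key is to split $H$ along its two halves $Y=\{y_1,\dots,y_7\}$ and $Z=\{z_1,\dots,z_7\}$, which are interchanged by the automorphism $\beta$ sending $y_i\mapsto z_{\alpha(i)}$, $z_i\mapsto y_{\alpha(i)}$, $w\mapsto w$ for $\alpha=(1\,5)(2\,6)(3\,7)$, and which are joined only by the cross edges $y_3z_1$, $y_5z_7$ and the path $y_4-w-z_4$. Each half is a $6$-cycle with one extra vertex attached, has diameter $3$, and I would verify three facts about it by inspection: its independence number is $3$ (an independent set either lies in the $6$-cycle, hence has size at most $3$, or uses the attached vertex together with at most two of the remaining four cycle vertices); its only $2$-packings of size two are $\{y_1,y_7\}$ and $\{y_3,y_5\}$, the unique pairs at distance $3$; and, the diameter being $3<4$, a half meets any $3$-packing in at most one vertex. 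Hence each half contains at most $3+2+1=6$ vertices colored from $\{1,2,3\}$, and since $w$ accounts for at most one more, the total $12$ can be reached only if some half — say $Y$ by $\beta$ — has all but one of its vertices colored $1,2,3$. This forces $V_2\cap Y\in\{\{y_1,y_7\},\{y_3,y_5\}\}$ and then determines $V_1\cap Y$ and $V_3\cap Y$ up to two symmetric cases, in both of which $y_4\in V_1$.

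The main obstacle, and the core of the argument, is to show that saturating one half prevents the remainder of $H$ from supplying the rest, so that the total stays at $11$. Here the coupling through the cross edges and through $w$ does the work: $y_3\in V_1$ excludes $z_1$ from $V_1$ and $y_5\in V_1$ excludes $z_7$, while if $\{y_3,y_5\}\subseteq V_2$ then every vertex of $Z$ except $z_4$ lies within distance $2$ of $y_3$ or $y_5$ and is barred from $V_2$; meanwhile, as $y_4\in V_1$ in both configurations, the vertex $w$ (adjacent to $y_4$ and $z_4$, and of eccentricity $3$) lies in no $V_1$ and in no $3$-packing of size greater than one, so it can only take color $2$ or a high color. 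I would also list the distance-$4$ pairs explicitly — each has one endpoint in $Y$ and one in $Z$, so a $3$-packing has at most two vertices — and use this to check that the forced singletons $V_3\cap Y$ and $V_3\cap Z$ cannot be placed at mutual distance $4$. Running the resulting short finite case check (two configurations on $Y$, each against the admissible patterns on $Z$ and $w$) shows that $Z$ together with $w$ contributes at most five colored vertices in every case, so $|V_1|+|V_2|+|V_3|\le 6+5=11$, the desired contradiction. The delicate part is precisely this bookkeeping of how the two cross edges $y_3z_1$, $y_5z_7$ and the central vertex $w$ constrain the second half.
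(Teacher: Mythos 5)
Your proposal is correct and takes essentially the same approach as the paper: the same decomposition of $H$ into the halves $Y$ and $Z$ plus the central vertex $w$, the same per-half packing bounds ($|V_1\cap V(Y)|\le 3$, $|V_2\cap V(Y)|\le 2$ with only $\{y_1,y_7\}$ and $\{y_3,y_5\}$ attainable, $|V_3\cap V(Y)|\le 1$, and singleton classes for colors $\ge 4$ via $\diam(H)=4$), and the same target of showing that at most $14$ of the $15$ vertices can receive a color from $\{1,\dots,6\}$. The only genuine difference is organizational --- the paper pivots its case analysis on the value of $|c^{-1}(2)|$, while you pivot on which half is saturated and make the WLOG step explicit via the automorphism $\beta$ --- and I checked that your saturated-half endgame closes in both configurations (in each, $Z\cup\{w\}$ indeed contributes at most $5$ once the cross edges, the exclusion of $w$ from $V_1$, and the distance-$4$ pairs are taken into account), so the argument is sound.
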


\begin{proof}
Suppose to the contrary that $\chi_\rho(H) \leq 6$ and let $c$ be an arbitrary $6$-packing coloring of a graph $H$. Denote by $Y$ the subgraph of $H$ induced by the vertices $y_1, \ldots, y_7$, and by $Z$ the subgraph of $H$ induced by the vertices $z_1,  \ldots, z_7$.
For $c$ restricted to $Y$ (or analogously to $Z$) we have $|c^{-1}(1) \cap V(Y)| \leq 3$, $|c^{-1}(2) \cap V(Y)| \leq 2$ and $|c^{-1}(3) \cap V(Y)| \leq 1$. Therefore $|c^{-1}(1) \cap V(H)| \leq 7$, $|c^{-1}(2) \cap V(H)| \leq 5$, $|c^{-1}(3) \cap V(H)| \leq 2$. Since the diameter of $H$ is $4$, we also have $|c^{-1}(l) \cap V(H)| = 1$ for any $l \in \{4, 5, 6\}$. 
We distinguish four cases with respect to $|c^{-1}(2) \cap V(H)|$.

\textbf{Case 1.} $|c^{-1}(2) \cap V(H)| = 5$. 

All vertices, which get color $2$, are uniquely determined and these are $y_1, y_7, w, z_3$ and $z_5$. Since $c(w) \neq 1$,  we have $|c^{-1}(1) \cap V(H)| \leq 6$. Recall that $|c^{-1}(1) \cap V(Y)| \leq 3$ and $|c^{-1}(1) \cap V(Z)| \leq 3$, but it is easy to see that if $|c^{-1}(1) \cap V(Y)|=3 $ then $|c^{-1}(1) \cap V(Z)|$ cannot reach the established upper bound (respectively, if $|c^{-1}(1) \cap V(Z)|=3 $, then $|c^{-1}(1) \cap V(Y)| < 3 $). 
This yields $|c^{-1}(1) \cap V(H)| \leq 5$. If $|c^{-1}(1) \cap V(H)| = 5$ (and $|c^{-1}(2) \cap V(H)| = 5$), then vertices colored by $1$ are $y_3, y_4, y_5, z_2$ and $z_6$ (or $z_1, z_4, z_7, y_2$ and $y_6$). But then $|c^{-1}(3) \cap V(H)| \leq 1$ and hence $\sum_{i=1}^{6}|c^{-1}(i)\cap V(H)| \leq 14$. This is contradiction since $H$ has $15$ vertices, but we can color only $14$ of them. The same contradiction arises if $|c^{-1}(1) \cap V(H)| < 5$.

\textbf{Case 2.} $|c^{-1}(2) \cap V(H)| = 4$. 

Suppose that $c(w) \neq 2$. Then the vertices, which are colored by $2$, are $y_1, y_7, z_3$ and $z_5$. 
If $c(w)=1$, then $|c^{-1}(1) \cap V(H)| \leq 5$ and $\sum_{i=1}^{6}|c^{-1}(i)\cap V(H)| \leq 14$, therefore we get the same contradiction as above. If $c(w) \neq 1$, we have the analogous situation as in the Case $1$, which implies $|c^{-1}(1) \cap V(H)| \leq 5$ and $\sum_{i=1}^{6}|c^{-1}(i)\cap V(H)| \leq 14$, a contradiction. 

Next, suppose that $c(w) = 2$. Without loss of generality we may assume that $|c^{-1}(2) \cap V(Y)| = 2$ (and $|c^{-1}(2) \cap V(Z)| =1$). This is not possible if $c(y_3)=2$ and $c(y_5)=2$, hence the vertices of $Y$ colored by $2$ are $y_1$ and $y_7$. It is clear that $|c^{-1}(1) \cap V(H)| \leq 6$, but it is also easy to see that if $|c^{-1}(1) \cap V(Y)| =3$, then $|c^{-1}(1) \cap V(Z)| \leq 2$ (note that $|c^{-1}(2) \cap V(Z)| =1$, namely $c(z_3)=2$ or $c(z_5)=2$). Hence $|c^{-1}(1) \cap V(H)| \leq 5$ and we get the same contradiction as above.

\textbf{Case 3.} $|c^{-1}(2) \cap V(H)| = 3 $. 

If $c(w) \neq 1$ then $|c^{-1}(1) \cap V(H)| \leq 6$ and $\sum_{i=1}^{6}|c^{-1}(i) \cap V(H)| \leq 14$, a contradiction.

Suppose that $c(w)=1$. Again, without loss of generality we may assume that $|c^{-1}(2) \cap V(Y)| = 2$ (and $|c^{-1}(2) \cap V(Z)| =1$). The vertices of $Y$, colored by $2$, are either $y_1$ and $y_7$ or $y_3$ and $y_5$, but in each case this yields $|c^{-1}(1) \cap V(Y)| \leq 2$ (note that $c(y_4) \neq 1$). Hence $|c^{-1}(1) \cap V(H)| \leq 6$ and $\sum_{i=1}^{6}|c^{-1}(i) \cap V(H)| \leq 14$, which is a contradiction.

\textbf{Case 4.} $|c^{-1}(2) \cap V(H)| \leq 2 $. 

In this case we have $\sum_{i=1}^{6}|c^{-1}(i) \cap V(H)| \leq 14$, which is again a contradiction.
\qed
\end{proof}


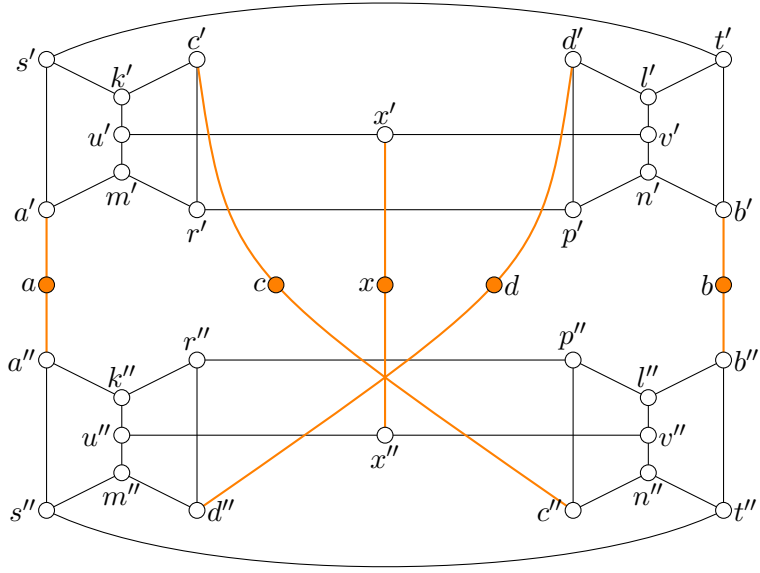
\begin{figure}[h]
\begin{center}
\begin{tikzpicture}
\def\vr{3pt}
\def\len{1}

\foreach \j in {2, 6}{
\foreach \i in {1, 3}{
\coordinate(y_\i^\j) at (\i-1, \j);
\coordinate(z_\i^\j) at (\i+6, \j);}}
\foreach \j in {0, 4}{
\foreach \i in {5, 7}{
\coordinate(y_\i^\j) at (\i-5, \j);
\coordinate(z_\i^\j) at (\i+2, \j);}}
\foreach \i in {2, 4, 6}{
\coordinate(y_\i) at (1, 2-\i*0.25);
\coordinate(z_\i) at (8, 2-\i*0.25);
\coordinate(y'_\i) at (1, 6-\i*0.25);
\coordinate(z'_\i) at (8, 6-\i*0.25);
}
\coordinate (w) at (4.5, 1);
\coordinate (w') at (4.5, 5);
\draw (y_1^2)--(y_5^0)--(y_6)--(y_7^0)--(y_3^2)--(y_2)--(y_1^2); 
\draw (y_2)--(y_4)--(y_6); 
\draw (z_1^2)--(z_5^0)--(z_6)--(z_7^0)--(z_3^2)--(z_2)--(z_1^2);
\draw (z_2)--(z_4)--(z_6);
\draw (y_4)--(w)--(z_4);
\draw (y_3^2)--(z_1^2);
\draw (y_5^0) .. controls (2, -1) and (7, -1) .. (z_7^0);
\draw (y_1^6)--(y_5^4)--(y'_6)--(y_7^4)--(y_3^6)--(y'_2)--(y_1^6); 
\draw (y'_2)--(y'_4)--(y'_6); 
\draw (z_1^6)--(z_5^4)--(z'_6)--(z_7^4)--(z_3^6)--(z'_2)--(z_1^6);
\draw (z'_2)--(z'_4)--(z'_6);
\draw (y'_4)--(w')--(z'_4);
\draw (y_7^4)--(z_5^4);
\draw (y_1^6) .. controls (2, 7) and (7, 7) .. (z_3^6);
%
\draw[thick, orange](w)--(w');
\draw[thick, orange](y_1^2)--(y_5^4);
\draw[thick, orange](z_3^2)--(z_7^4);
\draw[thick, orange](y_3^6).. controls (2.4, 3.2) .. (z_5^0);
\draw[thick, orange](y_7^0).. controls (6.6, 3.2) ..(z_1^6);
\foreach \i in {2, 4, 6}{
\draw(y_\i)[fill=white] circle(\vr);
\draw(z_\i)[fill=white] circle(\vr);
\draw(y'_\i)[fill=white] circle(\vr);
\draw(z'_\i)[fill=white] circle(\vr);
}
\foreach \j in {2, 6}{
\foreach \i in {1, 3}{
\draw(y_\i^\j)[fill=white]circle(\vr);
\draw(z_\i^\j)[fill=white]circle(\vr);}}
\foreach \j in {0, 4}{
\foreach \i in {5, 7}{
\draw(y_\i^\j)[fill=white]circle(\vr);
\draw(z_\i^\j)[fill=white]circle(\vr);}}
\draw(w)[fill=white] circle(\vr);
\draw(w')[fill=white] circle(\vr);
\draw(y_1^2)node[left]{$a''$}; \draw(y_2)node[above]{$k''$}; \draw(y_3^2)node[above]{$r''$}; \draw(y_4)node[left]{$u''$}; \draw(y_5^0)node[left]{$s''$}; \draw(y_6)node[below]{$m''$}; \draw(y_7^0)node[right]{$d''$};
\draw(y_1^6)node[left]{$s'$}; \draw(y'_2)node[above]{$k'$}; \draw(y_3^6)node[above]{$c'$}; \draw(y'_4)node[left]{$u'$}; \draw(y_5^4)node[left]{$a'$}; \draw(y'_6)node[below]{$m'$}; \draw(y_7^4)node[below]{$r'$};
\draw(z_1^2)node[above]{$p''$}; \draw(z_2)node[above]{$l''$}; \draw(z_3^2)node[right]{$b''$}; \draw(z_4)node[right]{$v''$}; \draw(z_5^0)node[left]{$c''$}; \draw(z_6)node[below]{$n''$}; \draw(z_7^0)node[right]{$t''$};
\draw(z_1^6)node[above]{$d'$}; \draw(z'_2)node[above]{$l'$}; \draw(z_3^6)node[above]{$t'$}; \draw(z'_4)node[right]{$v'$}; \draw(z_5^4)node[below]{$p'$}; \draw(z'_6)node[below]{$n'$}; \draw(z_7^4)node[right]{$b'$};
\draw(w)node[below]{$x''$};
\draw(w')node[above]{$x'$};
\coordinate(x) at (4.5, 3); \draw(x)[fill=orange]circle(\vr); \draw(x)node[left]{$x$};
\coordinate(a) at (0, 3); \draw(a)[fill=orange]circle(\vr); \draw(a)node[left]{$a$};
\coordinate(b) at (9, 3); \draw(b)[fill=orange]circle(\vr); \draw(b)node[left]{$b$};
\coordinate(c) at (3.05, 3); \draw(c)[fill=orange]circle(\vr); \draw(c)node[left]{$c$};
\coordinate(d) at (5.95, 3); \draw(d)[fill=orange]circle(\vr); \draw(d)node[right]{$d$};
\end{tikzpicture}
\end{center}
\caption{Graph $G_0$} 
\label{fig:G1}
\end{figure}


Next, starting from two copies of graph $H$ (denoted by $H'$ and $H''$), adding five vertices, and adding edges as shown in Fig.~\ref{fig:G1} we obtain graph $G_0$. 

\begin{lemma}
\label{lem:diam}
The diameter of the graph $G_0$, shown in Fig. \ref{fig:G1}, is at most $6$. 
\end{lemma}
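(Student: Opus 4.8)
The plan is to bound $d_{G_0}(u,v)$ by $6$ for every pair of vertices, split according to where $u$ and $v$ lie. Write $H''$ and $H'$ for the two copies of $H$, and call $x,a,b,c,d$ the five added vertices \emph{links}. Each link $t\in\{a,b,c,d,x\}$ is adjacent to exactly one vertex $t''$ of $H''$ and one vertex $t'$ of $H'$, and from Fig.~\ref{fig:G1} these are precisely the five degree-$2$ vertices of each copy, which I will call \emph{ports}. Since adding vertices and edges can only decrease distances and $\diam(H)=4$, any two vertices lying in the same copy are at distance at most $4$ in $G_0$. A link sits at distance $1$ from a port, and every port has eccentricity at most $4$ in its copy, so a link is within distance $5$ of each vertex of either copy, and two links are joined by a path of length at most $1+4+1=6$ routed through a single copy. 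Hence all pairs involving a link are harmless, and it remains to bound $d_{G_0}(u,v)$ when $u\in V(H'')$ and $v\in V(H')$.

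For such a pair I route through one link: for every $t$ the walk $u\to t''\to t\to t'\to v$ yields
\[
d_{G_0}(u,v)\le d_{H''}(u,t'')+2+d_{H'}(t',v),
\]
so it suffices to produce, for each $(u,v)$, a link $t$ with $d_{H''}(u,t'')+d_{H'}(t',v)\le 4$. Two structural facts, read off the figure, drive this. First, in each copy the five ports form an efficient dominating set: their closed neighbourhoods partition the $15$ vertices, so every vertex is at distance at most $1$ from a \emph{unique} port and at distance at least $2$ from all others. Second, the ports $x''$ and $x'$ (the endpoints of link $x$) are the two copies of the vertex $w$ of $H$, which has eccentricity only $3$.

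The verification of the displayed min-bound is the main obstacle, but the two facts cut it down drastically. If $u$ is a port $t''$ then $d_{H''}(u,t'')=0$ and the bound gives $2+d_{H'}(t',v)\le 6$; symmetrically if $v$ is a port. If $u$ or $v$ lies in the closed neighbourhood of $w$, routing through link $x$ gives $d_{H''}(u,w)+2+d_{H'}(w,v)\le 1+2+3=6$, using the eccentricity-$3$ fact. This leaves only pairs in which both $u$ and $v$ are non-port vertices outside $N[w]$. For such a $u$ let $t_u$ be its unique nearest port; routing through link $t_u$ gives $d_{G_0}(u,v)\le 3+d_{H'}(t_u',v)$, which exceeds $6$ only when $v$ is one of the few vertices at distance exactly $4$ from $t_u'$ in $H'$. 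I expect these finitely many ``antipodal'' exceptions — a short explicit list once the distance profiles of the five ports are tabulated in each copy — to be the only genuine work: for each one checks by hand that a \emph{different} link $t$ realises $d_{H''}(u,t'')+d_{H'}(t',v)\le 4$ (the best link is in general through neither the port nearest $u$ nor the port nearest $v$, which is why a single uniform routing does not suffice). Assembling all cases gives $\diam(G_0)\le 6$.
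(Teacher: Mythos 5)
Your framework is sound and every structural fact you read off the figure is correct: the five attachment points of each copy are exactly its degree-$2$ vertices, their closed neighbourhoods do partition the $15$ vertices of the copy (so they form an efficient dominating set), and $x',x''$ are the copies of $w$, which has eccentricity $3$ in $H$. The reductions in your first sub-cases (same copy; pairs involving a link; one endpoint a port; one endpoint in $N[x'']$ or $N[x']$) are all fine. The gap is in the last and only nontrivial case: for pairs of non-port vertices outside $N[x'']$ and $N[x']$ you identify the exceptional pairs (those $v$ at distance $4$ from $t_u'$) and then merely state that you \emph{expect} a different link to realise $d_{H''}(u,t'')+d_{H'}(t',v)\le 4$ for each of them. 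That expectation is precisely the content of the lemma in the one case where it could fail, so as written this is an outline, not a proof. The check is genuinely necessary: for $u=k''$ and $v=n'$ one has $d_{H''}(k'',a'')=1$ but $d_{H'}(a',n')=4$, and $d_{H'}(n',b')=1$ but $d_{H''}(k'',b'')=4$, so routing through the nearest port of \emph{either} endpoint only gives the bound $7$; one must observe that $d(k'',x'')=d(n',x')=2$ (or $d(k'',d'')=d(n',d')=2$) to get $6$.

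Having tabulated the distances from all $15$ vertices of $H$ to the five ports, I can confirm that your inequality $\min_t\bigl(d_{H''}(u,t'')+d_{H'}(t',v)\bigr)\le 4$ does hold for every cross pair, and that after your reductions only eight exceptional pairs survive, each settled by one of the remaining links; so the argument closes once that table and those eight checks are written down --- please add them. (Mind also that the roles of $c$ and $d$ are interchanged between the two copies: $c$ joins a $Y$-side port of $H'$ to a $Z$-side port of $H''$, so the table must be set up in the named vertices, not blindly in the local $y_i,z_i$ coordinates.) For comparison, the paper proves the lemma by a direct case analysis on the type of the endpoints (the five added vertices, their neighbours, the vertices at distance $2$ from $x$, and the remaining degree-$3$ vertices), using eccentricities of the attachment vertices within each copy; your efficient-domination observation organises essentially the same computation more systematically, but both arguments rest on the same finite distance check, which is the part your write-up omits.
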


\begin{proof}
Recall that the diameter of the graph $H$ is $4$. Hence it is clear that the distance between any two vertices of the subgraph $H'$ (or $H''$) of $G_0$ is at most $4$. Therefore we only need to check the distances between any two vertices of $G_0$, of which one is from $V(H')\cup \{a, b, c, d, x\}$ and the other from $V(H'') \cup \{a, b, c, d, x\}$. 

Each vertex from $\{a, b, c, d, x\}$ is adjacent to some vertex of $H'$ and also to some vertex of $H''$. Since the diameters of $H'$ and $H''$ are $4$, the distance between any vertex from $\{a, b, c, d, x\}$ and any other vertex of $G_0$ is at most $6$. 
 
Next, consider vertices $a', b', c'$, $d'$ and $x'$. Each of them is adjacent to some vertex from $\{a, b, c, d, x\}$ and hence is at distance $2$ from some vertex of $H''$. Therefore it is at distance at most $6$ from any vertex of $H''$ (actually from any vertex of $G_0$). By symmetry also vertices $a'', b'', c''$, $d''$ and $x''$ are at distance at most $6$ from any other vertex of $G_0$. 

Any vertex from $\{p', s', r', t'\}$ is at distance at most $3$ from vertices $a$ and $b$ or from vertices $c$ and $d$. This yields that any mentioned vertex is at distance at most $6$ from any vertex of $G_0$. By symmetry the same holds also for vertices $p'', s'', r''$ and $t''$. 

Note that the distance between $x$ and any vertex of $H'$ or $H''$ is $4$. Since the vertices $u', v', u''$ and $v''$ are at distance $2$ from vertex $x$, it is clear, that these vertices are at distance at most $6$ from any vertex of $G_0$. 

We still need to check the distances between vertices $k', l', m', n', k'', l'', m''$ and $n''$. As mentioned above, the distance between any two of them is at most $4$, if both of them belong either to $H'$ or to $H''$. Otherwise, for any two mentioned vertices there exist a path of length $6$ through vertex $x$, and hence any such two vertices are at distance at most $6$ in $G_0$. 
\qed
\end{proof}

\begin{lemma}
\label{lem:pakG0}
The packing chromatic number of the graph $G_0$ is at least $9$. 
\end{lemma}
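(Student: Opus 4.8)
The plan is to argue by contradiction: I would suppose that $\chi_\rho(G_0)\le 8$ and fix an $8$-packing coloring $c$ of $G_0$. The whole argument rests on two features of the construction. First, each of $H'$ and $H''$ is an induced subgraph isomorphic to $H$, and since adding vertices and edges can only shorten distances, the restriction of $c$ to $V(H')$ (respectively $V(H'')$) is a valid packing coloring of a standalone copy of $H$; hence Lemma~\ref{lema_pakirno_H} and, more importantly, its proof apply to each copy. Second, by Lemma~\ref{lem:diam} we have $\diam(G_0)\le 6$, which will severely restrict the classes of the large colors.

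I would first record the global consequence of the diameter bound: a color $i$ may be repeated only on vertices at pairwise distance greater than $i$, so for $i\in\{6,7,8\}$ the class $c^{-1}(i)$ contains at most one vertex of $G_0$. In particular none of the colors $6,7,8$ can appear simultaneously on a vertex of $H'$ and a vertex of $H''$, since such a pair would lie at distance at most $6$. The key step is then to strengthen Lemma~\ref{lema_pakirno_H} into a statement about how many \emph{large} colors each copy of $H$ must consume. Because $\diam(H)=4$, inside a standalone $H$ every color $i\ge 4$ occupies at most one vertex; I call the colors from $\{4,5,6,7,8\}$ large. Revisiting the case analysis proving Lemma~\ref{lema_pakirno_H}, I would observe that it uses only the bounds $|c^{-1}(1)\cap V(H)|\le 7$, $|c^{-1}(2)\cap V(H)|\le 5$, $|c^{-1}(3)\cap V(H)|\le 2$ together with the fact that each large color covers at most one vertex; the specific names $4,5,6$ are never used. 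Thus the same computation shows that colors $\{1,2,3\}$ together with any three large colors can cover at most $14<15$ vertices of $H$, so $c$ must use at least \emph{four} large colors on $V(H')$, and likewise at least four on $V(H'')$.

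To finish, I would write $S'=\{\,i\ge 4 : c^{-1}(i)\cap V(H')\neq\emptyset\,\}$ and $S''$ analogously, so that $|S'|\ge 4$ and $|S''|\ge 4$ with $S',S''\subseteq\{4,5,6,7,8\}$. Since $|S'\cap\{4,5\}|\le 2$, each copy forces $|S'\cap\{6,7,8\}|\ge 2$ and $|S''\cap\{6,7,8\}|\ge 2$. But the diameter observation above makes the sets $S'\cap\{6,7,8\}$ and $S''\cap\{6,7,8\}$ disjoint, whence
\[
|S'\cap\{6,7,8\}|+|S''\cap\{6,7,8\}|=|(S'\cup S'')\cap\{6,7,8\}|\le |\{6,7,8\}|=3,
\]
contradicting $2+2=4>3$. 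This contradiction gives $\chi_\rho(G_0)\ge 9$.

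I expect the main obstacle to be the strengthening in the second paragraph: one must verify carefully that the proof of Lemma~\ref{lema_pakirno_H} is genuinely insensitive to \emph{which} three large colors are used, so that the conclusion ``any three large colors cover at most $14$ vertices of $H$'' is legitimate rather than an artifact of the colors happening to be $4,5,6$. Once this is secured, the remaining steps are just bookkeeping driven by the diameter bound and the pigeonhole count on $\{6,7,8\}$.
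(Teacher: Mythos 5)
Your proof is correct, and its skeleton is the same as the paper's: both arguments come down to the fact that each of $H'$ and $H''$ must receive at least two colors from $\{6,7,8\}$, while $\diam(G_0)\le 6$ forces each of these colors onto at most one vertex of $G_0$ and hence into at most one of the two copies, so $2+2>3$ yields the contradiction. The one place you diverge is in justifying the per-copy requirement: the paper invokes Lemma~\ref{lema_pakirno_H} as a black box (each copy needs at least $7$ distinct colors, hence at least two among $\{6,7,8\}$ once the palette is $\{1,\ldots,8\}$), whereas you reopen its case analysis to extract the sharper statement that colors $1,2,3$ cover at most $11$ of the $15$ vertices of $H$, so at least four colors from $\{4,\ldots,8\}$ are needed. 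Your variant is sound --- the case analysis in Lemma~\ref{lema_pakirno_H} indeed only bounds the classes of colors $1,2,3$ together with the fact that each color $\ge 4$ occupies a single vertex of $H$, and never uses the identities of the large colors --- and it has the small advantage of making explicit what the paper leaves implicit, namely that a restriction of $c$ to $H'$ need not use the colors $1,\ldots,7$ literally (the paper's black-box use of $\pch(H)\ge 7$ tacitly relies on relabeling a set of six colors $i_1<\cdots<i_6$ down to $1,\ldots,6$).
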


\begin{proof}
Recall that $G_0$ consists of two distinct copies of subgraphs isomorphic to $H$. By Lemma \ref{lema_pakirno_H} for a packing coloring of each of the two copies at least $7$ colors is required. But since the diameter of $G_0$ is at most $6$, the colors $6$ and $7$ can be used only in one copy of the graph $H$, so in the other copy of $H$ they need to be substituted by (two) additional colors. Therefore for a packing coloring of the entire graph $G_0$ at least $9$ colors is required (actually $9$ colors is already required for a packing coloring of vertices in $V(H') \cup V(H'')$). 
\qed
\end{proof}

\begin{lemma}
For any two (not necessarily distinct) vertices $z,w$ of $G_0$ there exists a vertex $y_{z,w}$ in $G_0$ with $\deg(y_{z,w})=2$ such that $d(z,y_{z,w})+d(y_{z,w},w)\le 6$.
\label{lem:stopnje}
\end{lemma}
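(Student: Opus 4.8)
The plan is to first determine the degree-$2$ vertices of $G_0$ and then to choose $y_{z,w}$ among them. Each of the five added vertices $a,b,c,d,x$ subdivides one of the connecting edges joining a degree-$2$ vertex of $H'$ to a degree-$2$ vertex of $H''$; hence each of $a,b,c,d,x$ has degree $2$, whereas every vertex lying inside a copy $H'$ or $H''$ has degree $3$ in $G_0$ (the five former degree-$2$ vertices of each copy each gain exactly the one incident connecting edge). Thus the degree-$2$ vertices of $G_0$ are precisely $a,b,c,d,x$. This immediately disposes of the case in which one of $z,w$ is itself one of these five: if, say, $z$ has degree $2$, take $y_{z,w}=z$, so that $d(z,y_{z,w})+d(y_{z,w},w)=d(z,w)\le\diam(G_0)\le 6$ by Lemma~\ref{lem:diam}. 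In particular this covers $z=w$ when $z$ has degree $2$, so from now on I may assume $z,w\in V(H')\cup V(H'')$.

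Suppose next that $z$ and $w$ lie in different copies, say $z\in V(H')$ and $w\in V(H'')$. Deleting $\{a,b,c,d,x\}$ from $G_0$ separates $H'$ from $H''$, since every edge running between the two copies passes through one of these subdivision vertices. Therefore any shortest $z,w$-path, whose length is $d(z,w)\le 6$, must contain some $p\in\{a,b,c,d,x\}$; choosing $y_{z,w}=p$ gives $d(z,p)+d(p,w)=d(z,w)\le 6$, as $p$ lies on a shortest path.

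The remaining and principal case is $z,w$ in the same copy, say $z,w\in V(H')$. For an added vertex $p$ let $p'$ denote its (degree-$2$) neighbour inside $H'$. Concatenating a shortest $z,p'$-path in $H'$ with the edge $p'p$ shows $d(z,p)\le d_{H'}(z,p')+1$, and symmetrically $d(p,w)\le d_{H'}(p',w)+1$; hence it suffices to find an added vertex $p$ with
\[
 d_{H'}(z,p')+d_{H'}(p',w)\le 4 .
\]
As $p$ ranges over $a,b,c,d,x$, its neighbour $p'$ ranges over all five degree-$2$ vertices of $H'$, so under the isomorphism $H'\cong H$ this reduces to a statement about $H$ alone: \emph{for all $u,v\in V(H)$ there is a degree-$2$ vertex $q$ of $H$ with $d_H(u,q)+d_H(q,v)\le 4$.}

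I expect this $H$-statement to be the only real work. The crucial feature of $H$ is that its five degree-$2$ vertices cover $H$ within radius $1$: a direct check shows that every vertex of $H$ is equal or adjacent to one of them. Granting this, if $q_u$ is a degree-$2$ vertex with $d_H(u,q_u)\le 1$, then $d_H(u,q_u)+d_H(q_u,v)\le 2d_H(u,q_u)+d_H(u,v)\le 2+d_H(u,v)$, which settles every pair with $d_H(u,v)\le 2$; and whenever one of $u,v$ has degree $2$ one simply takes $q$ to be that endpoint, making the sum equal to $d_H(u,v)\le\diam(H)=4$. The only pairs that survive are those of degree $3$ at mutual distance $3$ or $4$. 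Here I would use that the degree-$3$ vertices of $H$ induce a single $10$-cycle and finish by inspecting these finitely many pairs: the four distance-$4$ pairs each admit a degree-$2$ vertex on a shortest path (so the sum is $4$), and for each distance-$3$ pair one exhibits a degree-$2$ vertex achieving sum at most $4$ (the central degree-$2$ vertex of $H$ already works for most of them). The main obstacle is thus purely the bookkeeping of these ``across-the-cycle'' pairs; the radius-$1$ covering property is exactly what guarantees that a suitable $q$ always exists.
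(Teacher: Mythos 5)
Your proof is correct and follows essentially the same route as the paper: identify $a,b,c,d,x$ as the only degree-$2$ vertices of $G_0$, settle the degree-$2$-endpoint and cross-copy cases exactly as the paper does, and reduce the same-copy case to a finite verification. Your packaging of that last case as a statement about $H$ alone (every pair $u,v$ admits a degree-$2$ vertex $q$ with $d_H(u,q)+d_H(q,v)\le 4$, obtained from the radius-$1$ covering by the five degree-$2$ vertices plus a check of the remaining degree-$3$ pairs at distance $3$ or $4$) is a cleaner organization than the paper's direct table of eight pairs in $G_0$, and the finite checks you defer do all go through as you assert.
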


\begin{proof}
Let $z$ and $w$ be any two vertices in $G_0$. If at least one of them has degree $2$, the statement follows from Lemma~\ref{lem:diam}. 

Suppose that one of the vertices $z$ or $w$ belongs to $V(H')$ and the other to $V(H'')$. Then each $z,w$-path contains a vertex of degree $2$, denote it by $y_{z,w}$. Clearly, this is also true for a shortest $z,w$-path, of which length is at most $6$, since the diameter of $G_0$ is at most $6$. 
Therefore the sum of the distances from vertices $z$ and $w$ to vertex $y_{z,w}$ (of degree $2$) is at most $6$.

Next, without loss of generality suppose that both, $z$ and $w$, belong to $V(H')$. 
Since the diameter of the subgraph $H'$ of $G_0$ is $4$, it is clear that the distance between $z$ (or $w$) and any vertex in $\{a, b, c, d, x\}$ is at most $5$. Therefore, if $w$ (or $z$) belongs to $\{a', b', c', d', x'\}$, then the lemma holds (in this case $y_{z,w}$ is a neighbour of $w$). 

The lemma also holds if $z$ (or $w$) is at distance $2$ from $x$ or if $z, w \in \{k', m', l', n'\}$. In both cases the vertex $y_{z,w}$ is provided by $x$; in the first case we use the fact that the distance between $x$ and any vertex from $V(H')$ is at most $4$ in $G_0$, and in the second case we use the fact that $z$ and $w$ are both at distance $3$ from $x$.

\begin{table}[!ht]
\centering
\caption{Vertices $z, w$ and $y_{z,w}$ from the proof of Lemma \ref{lem:stopnje}}
\label{tabela1}
\begin{tabular}{|l|l|l|l|}
 \hline $z$ & $w$ & $y_{z,w}$ & $d(z, y_{z,w})+d(w, y_{z,w})$  \\
 \hline $s'$ & $r'$ & $a$ & $2+3=5$  \\
 \hline $s'$ & $n'$ & $b$ & $3+2=5$   \\
 \hline $s'$ & $p'$ & $a$ & $2+4=6$   \\
 \hline $k'$ & $p'$ & $c$ & $2+3=5$   \\
 \hline $m'$ & $t'$ & $a$ & $2+3=5$   \\
 \hline $r'$ & $t'$ & $a$ & $3+3=6$   \\
 \hline $r'$ & $l'$ & $d$ & $3+2=5$   \\
 \hline $p'$ & $t'$ & $b$ & $3+2=5$   \\
\hline 
\end{tabular}
\end{table}

We still need to check some pairs of vertices in $\{k', l', m', n', p', r', s', t'\}$. Each listed vertex is at distance $2$ from some vertex of $\{a, b, c, d\}$. Hence in the case when the distance between two listed vertices is at most $2$, the lemma holds, namely $y_{z,w}$ is provided by the vertex, which is at distance $2$ from $z$ (resp., $w$), $w$ (resp., $z$) is then at distance at most $4$ from $y_{z,w})$. 
The pairs of vertices of degree $3$, which we actually need to check are written in Table \ref{tabela1}. For each pair the corresponding vertex of degree $2$ is also listed, which provides that the corresponding sum of the distances is at most $6$.

In the case when $z$ coincides with $w$, the statement clearly holds, since each vertex of $H'$ (resp. $H''$) is at distance at most $2$ from some vertex in $\{a, b, c, d, x\}$. 

\qed
\end{proof}


We continue by presenting the family of graphs $G_k$, which possess the desired properties. Recall that a {\em perfect binary tree} is a (rooted) binary tree in which all interior vertices have two children and all leaves have the same depth. (The orientation of the tree is used only for the reason of easier presentation, yet the resulting tree is considered as non-oriented.)
Next, we present a natural labelling of vertices in a perfect binary tree $T$. Firstly, the root is denoted by the empty label, while given a label $\ell$ of an interior vertex in the tree, the labels of its two children are obtained from $\ell$ by adding a bit to the left-hand side of the label $\ell$; more precisely, $0$ is added to the left of $\ell$ for the left child and $1$ is added to the left of $\ell$ for the right child. In this way, vertices in the $m$th level of $T$ (having distance $m$ from the root) have as its label an $m$-tuple, which consists of binary values (zeros and ones). In particular, the left-most leaf in $T$ is denoted by $0\ldots 0$, while the right-most leaf by $1\ldots 1$, where the number of zeros (resp., ones) coincides with the depth of $T$. 
To distinguish vertices of $T$ by vertices of other perfect binary trees, we denote its vertices by $T(\beta_1 \ldots \beta_m)$, where $\beta_i\in\{0,1\}$ for all $i$; see Fig.~\ref{fig:tree}.

\begin{figure}[h]
\begin{center}
\begin{tikzpicture}
\def\vr{3pt}
\def\len{1}

\coordinate(y_0^0) at (0,0);

\foreach \i in {2, 4, 6, 8, 10, 12, 14}{
\ifthenelse{\i < 7}{\coordinate(y_\i^0) at (\i-0.1*\i, 0);}{\coordinate(y_\i^0) at (\i-1-0.1*\i, 0);}
}
\foreach \i in {1, 5, 9, 13}{
\ifthenelse{\i < 7}{\coordinate(y_\i^2) at (\i-0.1*\i, 2);}{\coordinate(y_\i^2) at (\i-1-0.1*\i, 2);}
} 
\coordinate (y_3^4) at (2.7, 4);
\coordinate (y_11^4) at (9, 4);
\coordinate (y_7^6) at (5.8, 6);  
\draw (y_0^0)--(y_1^2)--(y_3^4)--(y_7^6)--(y_11^4)--(y_13^2)--(y_14^0); \draw (y_1^2)--(y_2^0); \draw (y_3^4)--(y_5^2)--(y_6^0); \draw (y_4^0)--(y_5^2); \draw (y_13^2)--(y_12^0); \draw (y_11^4)--(y_9^2)--(y_8^0); \draw (y_10^0)--(y_9^2);
\foreach \i in {0, 2, 4, 6, 8, 10, 12, 14}{
\draw(y_\i^0)[fill=white] circle(\vr);}
\foreach \i in {1, 5, 9, 13}{
\draw(y_\i^2)[fill=white] circle(\vr); }
\foreach \i in {3, 11}{ 
\draw(y_\i^4)[fill=white] circle(\vr);  
}
\draw(y_7^6)[fill=white] circle(\vr);  
\draw(y_7^6)node[above]{\small{$T()$}};
\draw(y_3^4)node[left]{\small{$T(0)$}};
\draw(y_11^4)node[right]{\small{$T(1)$}};
\draw(y_1^2)node[left]{\small{$T(00)$}};
\draw(y_5^2)node[left]{\small{$T(10)$}};
\draw(y_9^2)node[right]{\small{$T(01)$}};
\draw(y_13^2)node[right]{\small{$T(11)$}};
\draw(y_0^0)node[left]{\footnotesize{$T(000)$}};
\draw(y_2^0)node[left]{\footnotesize{$T(100)$}};
\draw(y_4^0)node[left]{\footnotesize{$T(010)$}};
\draw(y_6^0)node[left]{\footnotesize{$T(110)$}};
\draw(y_8^0)node[right]{\footnotesize{$T(001)$}};
\draw(y_10^0)node[right]{\footnotesize{$T(101)$}};
\draw(y_12^0)node[right]{\footnotesize{$T(011)$}};
\draw(y_14^0)node[right]{\footnotesize{$T(111)$}};

\end{tikzpicture}
\end{center}
\caption{Perfect binary tree $T$ of depth $3$ and the described labelling of its vertices} 
\label{fig:tree}
\end{figure}
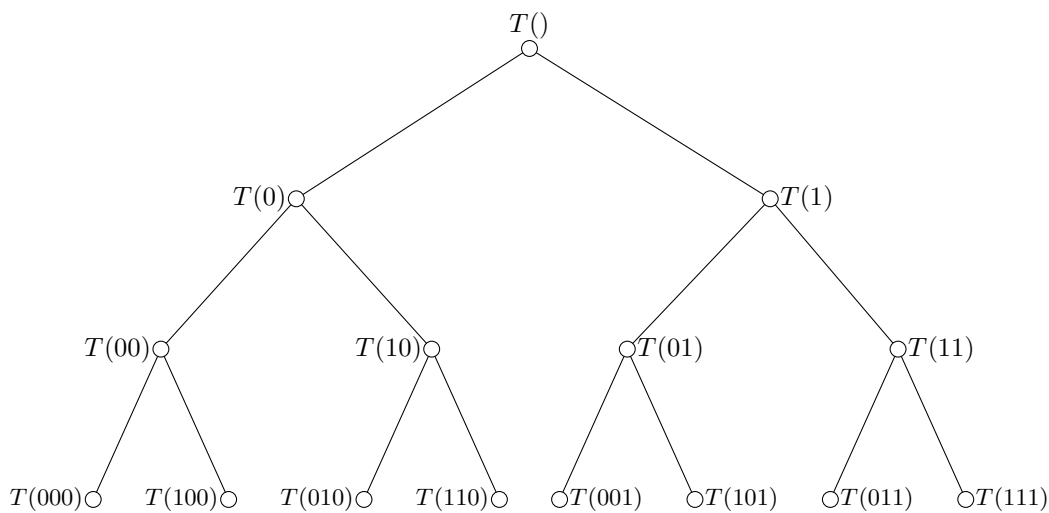

For any positive integer $k$, we begin the construction of the graph $G_k$ by taking $5$ copies of a perfect binary tree of depth $k$, (thus) each having $2^k$ leaves. The trees are denoted by $A,B,C,D$ and $X$ (suggesting to which vertices in the graphs $G_0$ they will be attached), and their vertices are labelled as described above. 
Now, add $2^k$ copies of the graph $G_0$ and attach them to the existing five trees as follows. For each binary $k$-tuple $\beta_1\ldots\beta_k$, where $\beta_i\in\{0,1\}$ for all $i$, take a copy of $G_0$, and denote it by $G_0(\beta_1 \ldots \beta_k)$. Now, identify the vertex $a\in V(G_0(\beta_1\ldots\beta_k))$ with $A(\beta_1 \ldots \beta_k)$, identify the vertex $b\in V(G_0(\beta_1 \ldots \beta_k))$ with $B(\beta_1 \ldots \beta_k)$, identify the vertex $c\in V(G_0(\beta_1 \ldots \beta_k))$ with $C(\beta_1 \ldots \beta_k)$, identify the vertex $d\in V(G_0(\beta_1 \ldots \beta_k))$ with $D(\beta_1 \ldots \beta_k)$, and identify the vertex $x\in V(G_0(\beta_1 \ldots \beta_k))$ with $X(\beta_1 \ldots \beta_k)$. Note that vertices of a copy of $G_0$ are identified only with leaves of (distinct) perfect binary trees. 

In particular, $G_1$ is obtained from two copies of $G_0$, namely $G_0(0)$ and $G_0(1)$, by adding edges between $a$ (resp., $b,c,d,x$) in $V(G_0(0))$ to $a$ (resp., $b,c,d,x$) in $V(G_0(1))$, and then subdividing these five new edges.


\begin{lemma} For the graph $G_1$ we have
$\diam(G_1)\le 8$, and $\pch(G_1)\ge 11$. 
\label{lem:G1}
\end{lemma}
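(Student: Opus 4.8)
The plan is to prove the two bounds separately, in each case exploiting the recursive description of $G_1$ as two copies $G_0(0),G_0(1)$ of $G_0$ joined by five subdivided edges (i.e.\ five length-$2$ paths) connecting the identically-named special vertices $a,b,c,d,x$.

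For the diameter bound $\diam(G_1)\le 8$, I would argue much as in Lemma~\ref{lem:diam}. The only distances that need checking are between a vertex $z\in V(G_0(0))$ and a vertex $w\in V(G_0(1))$, since within either copy the distance is at most $6$ by Lemma~\ref{lem:diam}. The key observation is that every shortest $z,w$-path must cross from one copy to the other through one of the five subdivision vertices, and each such crossing adds exactly $2$ to the length contributed by a single special vertex of $\{a,b,c,d,x\}$. I would invoke Lemma~\ref{lem:stopnje} here: it guarantees, for the vertices $z$ and $w'$ (the projection of $w$ back through the join), a short route through a degree-$2$ vertex. More directly, each of $a,b,c,d,x$ in a copy is at distance at most $4$ from every vertex of that copy (since $\diam(H)=4$ and each special vertex is adjacent to both $H'$ and $H''$ inside $G_0$), so a path from $z$ to the special vertex, across the subdivided edge (length $2$), and to $w$ costs at most $4+2+\text{(distance to }w\text{)}$. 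One then checks that this can always be arranged to total at most $8$, using that the special vertices in the target copy are also within distance $4$ of everything there. This step is routine case-checking analogous to the $G_0$ computation.

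For the chromatic bound $\pch(G_1)\ge 11$, I would mimic Lemma~\ref{lem:pakG0}. Each copy $G_0(0)$ and $G_0(1)$ contains two disjoint subgraphs isomorphic to $H$, so by Lemma~\ref{lema_pakirno_H} each copy needs at least $7$ colors on its $H$-subgraphs; and indeed each copy of $G_0$ requires at least $9$ colors by Lemma~\ref{lem:pakG0}. The crux is that, because $\diam(G_1)\le 8$, any color $i\ge 9$ can be used on at most one vertex of the \emph{entire} graph $G_1$. Suppose for contradiction that $\pch(G_1)\le 10$. Then the colors $9$ and $10$ each appear at most once in all of $G_1$, so at most two vertices in total carry a color exceeding $8$. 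But each copy of $G_0$ independently requires at least $9$ colors; restricting the coloring to $V(G_0(0))$ and to $V(G_0(1))$, each needs some color from $\{9,10,\dots\}$ appearing within that copy. With only colors $9,10$ available beyond the shared pool $\{1,\dots,8\}$ and each usable on just one vertex globally, one cannot simultaneously satisfy the $\ge 9$-color requirement in both copies — the argument parallels how colors $6,7$ were forced into distinct copies in Lemma~\ref{lem:pakG0}, now one level up.

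The main obstacle will be making the last step fully rigorous rather than heuristic. The delicate point is that ``$G_0$ requires $9$ colors'' does not immediately mean ``$G_0$ uses two colors $\ge 9$''; rather, one must track how many distinct \emph{large} colors are forced. The cleanest route is a counting argument in the spirit of the proof of Lemma~\ref{lema_pakirno_H}: bound the number of vertices each small color class $\{1,\dots,8\}$ can cover within $G_1$ (using that $G_1$ contains four disjoint copies of $H$ and the diameter constraint caps high colors at one vertex each), and show that with only $10$ colors available the total coverage falls short of $|V(G_1)|$. I expect that establishing the precise per-color capacities — especially reconciling the global diameter-$8$ constraint with the local structure of the four $H$-blocks — is where the real work lies, and it is the step I would develop most carefully.
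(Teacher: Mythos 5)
Your overall strategy (handle the diameter and the coloring bound separately, using the decomposition of $G_1$ into $G_0(0)$ and $G_0(1)$ joined by five subdivided edges) is the paper's strategy, but both halves as you have written them contain genuine gaps. For the diameter: your ``more direct'' argument rests on the claim that each of $a,b,c,d,x$ is within distance $4$ of every vertex of its copy of $G_0$. That is false --- Lemma~\ref{lem:diam} only gives distance at most $6$ from a special vertex to an arbitrary vertex of $G_0$ (e.g.\ $d(a,p')=4$ already, and other pairs are worse) --- and even if it were true, your own accounting gives $4+2+4=10$, not $8$. The mechanism that actually works is the one you mention and then abandon: apply Lemma~\ref{lem:stopnje} to $z$ and the vertex $\bar w$ of $G_0(0)$ corresponding to $w$, obtaining a \emph{single} special vertex $y$ with $d(z,y)+d(y,\bar w)\le 6$ (it is the bound on the \emph{sum} of the two legs, with $y$ chosen depending on the pair, that is the whole content of that lemma); crossing to $G_0(1)$ through the subdivision vertex above $y$ adds exactly $2$, giving $8$. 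You also do not address the cases where $z$ or $w$ is one of the five new subdivision vertices, which the paper handles separately.

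For $\pch(G_1)\ge 11$ there is an off-by-one error that breaks the contradiction. Since $\diam(G_1)\le 8$, two vertices of color $8$ would have to be at distance greater than $8$, so color $8$ --- not just colors $\ge 9$ --- appears on at most one vertex of all of $G_1$. You treat $\{1,\dots,8\}$ as a freely shared pool and only colors $9,10$ as global singletons; under that accounting a $10$-coloring is \emph{not} excluded (give $G_0(0)$ the colors $\{1,\dots,8,9\}$ and $G_0(1)$ the colors $\{1,\dots,8,10\}$), so ``one cannot simultaneously satisfy the $\ge 9$-color requirement in both copies'' does not follow, as you yourself half-suspect in your final paragraph. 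The fix is short and does not require the heavy per-color counting over the four $H$-blocks that you propose: the restriction of any packing coloring of $G_1$ to a copy of $G_0$ is a packing coloring of that copy (distances only grow in the subgraph), so after downward relabeling it uses at least $9$ distinct colors, of which at most $7$ lie in $\{1,\dots,7\}$; hence each copy uses at least two colors from $\{8,9,\dots\}$, and since every such color is a global singleton the two copies' pairs are disjoint, forcing at least $7+2+2=11$ colors. This is exactly the paper's argument, with the threshold at $8$ rather than $9$.
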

\begin{proof}
To prove the bound on the diameter of $G_1$ we distinguish several cases. Firstly, if two vertices $z,w$ are from the same copy of $G_0$, then clearly, $d_{G_1}(z,w)\le 6$ by Lemma~\ref{lem:diam}. Next, if $z$ and $w$ are in distinct copies of $G_0$ (say $z\in V(G_0(0)), w\in V(G_0(1)))$, then by Lemma~\ref{lem:stopnje} there exists a vertex in each of the copies, which is one of the vertices in $\{a,b,c,d,x\}$, such that the sum of the distances from $y$ to the vertex of its copy of $G_0$ and $z$ to the vertex of its copy of $G_0$ is bounded by $6$. More precisely, there exists a vertex $y_{z,w}\in V(G_0(0))$ and a vertex $y'_{z,w}\in V(G_0(1))$ that belong to the same binary tree ($A,B,C,D,$ or $X$) such that $d_{G_1}(z,y_{z,w})+d_{G_1}(w,y'_{z,w})\le 6$. Since $y_{z,w}$ and $y'_{z,w}$ belong to the same binary tree, their distance is 2, hence $d_{G_1}(z,w)\le d_{G_1}(z,y_{z,w})+d_{G_1}(y_{z,w}, y'_{z,w})+d_{G_1}(y'_{z,w},w)\le 8$. The third case is that $w$ and $z$ are roots of two distinct binary trees. In this case, each of them is at distance $1$ from some vertex in $G_0(0)$, and by using $\diam(G_0)\le 6$ we infer that $d_{G_1}(y,z)\le 8$. Finally, if only $z$ (resp., $w$) is the root of some binary tree, then clearly $d_{G_1}(y,z)\le 7$.
This concludes the proof of the claim that $\diam(G_1)\le 8$.

As $G_1$ contains two distinct copies of (induced) subgraphs isomorphic to $G_0$, by Lemma \ref{lem:pakG0} at least $9$ colors is required for a packing coloring of each of them. But since $\diam(G_1)\le 8$, the colors $8$ and $9$ can be used in only one copy of $G_0$ in $G_1$, so in the other copy they need to be substituted by (two) additional colors. Therefore for a packing coloring of the entire graph $G_1$ at least $11$ colors are required (actually $11$ colors are required already for the packing coloring of vertices in both copies of $G_0$). 
\qed
\end{proof}

We follow with our main result.

\begin{theorem} 
\label{th:main}
For any positive integer $k$, $\diam(G_k)\le 2k+6$, and $\pch(G_k)\ge 2k+9.$ 
\end{theorem}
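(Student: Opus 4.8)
The plan is to prove both bounds simultaneously by induction on $k$, exploiting the recursive structure of the family. The base cases $k=0$ and $k=1$ are already settled by Lemmas~\ref{lem:diam}, \ref{lem:pakG0} and~\ref{lem:G1}. The crucial structural observation is that $G_k$ decomposes into two vertex-disjoint induced copies of $G_{k-1}$ joined by five bridges. Indeed, each of the five depth-$k$ trees $A,B,C,D,X$ splits at its root into two depth-$(k-1)$ subtrees, one containing the leaves whose label ends in $0$ (i.e.\ $\beta_k=0$) and one containing those ending in $1$. Grouping the subtrees with $\beta_k=0$ together with the copies $G_0(\beta_1\ldots\beta_{k-1}0)$ yields an induced copy of $G_{k-1}$, and likewise for $\beta_k=1$; the only vertices and edges lying outside these two copies are the five tree-roots $A(),\ldots,X()$, each of degree $2$, joining the corresponding roots $A(0),A(1)$ (etc.) of the two copies. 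I would verify this decomposition carefully first, since everything rests on it.

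For the diameter, I would prove by induction the slightly stronger statement (a generalization of Lemma~\ref{lem:stopnje}): for any two vertices $z,w$ of $G_k$ there is a tree-root $r\in\{A(),\ldots,X()\}$ with $d(z,r)+d(r,w)\le 2k+6$; this immediately yields $\diam(G_k)\le 2k+6$. The base case is Lemma~\ref{lem:stopnje}, whose proof always selects the required vertex among the five roots $a,b,c,d,x$ of $G_0$. For the step, if $z,w$ lie in the same copy of $G_{k-1}$ I apply the induction hypothesis inside that copy and then pass to the adjacent root of $G_k$, paying a cost of $+2$. If $z$ and $w$ lie in different copies, I reflect $w$ to its twin $\bar w$ in the copy of $z$, apply the hypothesis to $(z,\bar w)$ to obtain a common root $r'$ (say $A(0)$) with $d(z,r')+d(r',\bar w)\le 2k+4$, and route the path through the bridge $A()$: since $A(0)$ and $A(1)$ are at distance $2$ and $d(A(1),w)=d(A(0),\bar w)$ by the symmetry of the two copies, I obtain $d(z,w)\le (2k+4)+2=2k+6$. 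The few remaining cases, in which one or both of $z,w$ is a tree-root of $G_k$, are handled directly using $\diam(G_{k-1})\le 2k+4$.

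For the packing chromatic number I would argue exactly as in Lemmas~\ref{lem:pakG0} and~\ref{lem:G1}. Fix a packing coloring $c$ of $G_k$; its restriction to either induced copy of $G_{k-1}$ is a valid packing coloring of $G_{k-1}$, since distances in an induced subgraph are at least the distances in $G_k$, so by induction each copy receives at least $2k+7$ distinct colors. Because $\diam(G_k)\le 2k+6$, any color $i\ge 2k+6$ can appear on at most one vertex of $G_k$; hence the \emph{large} colors (those $\ge 2k+6$) occurring in the two copies are disjoint. Each copy uses at most $2k+5$ of the \emph{small} colors $\{1,\ldots,2k+5\}$, so each must use at least two large colors, and by disjointness at least four large colors occur overall. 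A short count, splitting on whether all of $1,\ldots,2k+5$ are actually used, then forces $\pch(G_k)\ge (2k+5)+4=2k+9$, completing the induction.

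The main obstacle will be the diameter step, specifically making the routing-through-a-common-root argument precise: one must invoke the induction hypothesis in the correct copy, use the isomorphism between the two copies to transfer distances, and account correctly for the distance-$2$ detour across each bridge vertex, while also dispatching the boundary cases involving the five new roots. The recursive decomposition itself, though conceptually transparent, also needs care to confirm that no edges run between the two copies except through the roots, so that the two copies are genuinely induced and vertex-disjoint and the restriction argument for the packing coloring is legitimate.
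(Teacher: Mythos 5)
Your proposal is correct, and the packing-chromatic-number half is essentially the paper's own argument: restrict the coloring to the two induced copies of $G_{k-1}$, invoke the induction hypothesis to get $2k+7$ colors in each, and use $\diam(G_k)\le 2k+6$ to conclude that each color $\ge 2k+6$ occurs at most once in $G_k$, so the large colors of the two copies are disjoint (your count is fine in the clean form $s+(2k+7-s)+2=2k+9$, where $s$ is the number of small colors used in one copy; the paper phrases this even more informally). The genuine difference is in the diameter bound. The paper proves $\diam(G_k)\le 2k+6$ directly rather than by induction: for $z,w$ lying in copies of $G_0$ it applies Lemma~\ref{lem:stopnje} to produce two leaves of the same depth-$k$ tree (hence at distance at most $2k$) whose distances to $z$ and $w$ sum to at most $6$, and for internal tree vertices it writes down explicit labels and adds up $(k-\ell)+6+(k+\ell)$. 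Your inductive route, with the strengthened hypothesis that every pair of vertices of $G_k$ can be routed through one of the five tree-roots within total length $2k+6$, is a legitimate alternative: its base case $k=0$ is exactly Lemma~\ref{lem:stopnje} (the degree-$2$ vertices of $G_0$ are precisely $a,b,c,d,x$, the depth-$0$ roots), and the reflection-through-the-bridge step replaces the paper's label computation, at the cost of having to verify carefully the recursive decomposition into two induced copies of $G_{k-1}$ plus five degree-$2$ bridge vertices and the $+2$ bookkeeping across each bridge. Both arguments are sound; the paper's is more computational on the tree-internal cases, while yours treats all cases uniformly through the recursion and makes explicit the "hub" invariant that the paper only uses implicitly at the $G_0$ level.
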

\begin{proof}
First, we prove that $\diam(G_k)\le 2k+6$. If $z$ and $w$ are two vertices in a copy of $G_0$, say $z\in V(G_0(\beta_1\ldots \beta_k)), w\in V(G_0(\gamma_1\ldots\gamma_k))$, then by Lemma~\ref{lem:stopnje} there exist two vertices $y_{z,w}\in V(G_0(\beta_1 \ldots \beta_k))$ and $y_{w,z}\in V(G_0(\gamma_1\ldots\gamma_k))$, where $d_{G_k}(z,y_{z,w})+d_{G_k}(w,y_{w,z})\le 6$, such that $y_{z,w}$ and $y_{w,z}$ belong to the same binary tree in $G_k$ (either $A,B,C,D$ or $X$). As $d_{G_k}(y_{z,w},y_{w,z})\le 2k$, we infer that $d_{G_k}(z,w)\le 2k+6$. 
Similarly, if only one of the vertices $z,w$, say $z$, is in a copy of $G_0$ and the other (namely, $w$) is an internal vertex of some binary tree, then one also derives that $d_{G_k}(z,w)\le 2k+6$. Indeed, a shortest path from $w$ to a vertex in the copy of $G_0$ in which $z$ lies is less than $2k$, and by using that $\diam(G_0) \leq 6$ we infer the desired bound. Finally, let $z$ and $w$ be two internal vertices of some binary tree; without loss of generality, we may assume that $z\in V(A)$ and $w\in V(B)$ (if they belong to the same binary tree, the proof is even easier). Also we may assume that the distance from $z$ to the root of $A$, vertex $A()$, equals $\ell$ and is at least as big as the distance from $w$ to the root of $B$, vertex $B()$. Hence, the label of $z$ is $A(\beta_{k-\ell+1} \ldots \beta_k)$, where $\beta_{k-i+1}\in\{0,1\}$ for all $i\in\{1,\ldots,\ell\}$.
Now, the vertex $A(0 \ldots 0 \beta_{k-\ell+1} \ldots \beta_k)$ belongs to the copy of $G_0$, namely $G_0(0 \ldots 0 \beta_{k-\ell+1} \ldots \beta_k)$, and is at distance $k-\ell$ from $z$. Clearly, $d(w,B(0 \ldots 0 \beta_{k-\ell+1} \ldots \beta_k))\le k+\ell$, hence 
\begin{equation*}
\begin{split}
d(z,w) & \le d(z,A(0 \ldots 0 \beta_{k-\ell+1} \ldots \beta_k))+ \\
&+ d(A(0 \ldots 0 \beta_{k-\ell+1} \ldots \beta_k),B(0 \ldots 0 \beta_{k-\ell+1} \ldots \beta_k))+ \\
&+d(B(0,\ldots 0 \beta_{k-\ell+1} \ldots \beta_k),w)\\
& \le (k-\ell)+6+(k+\ell) \\
& = 2k+6.
\end{split}
\end{equation*}

For the proof that $\pch(G_k)\ge 2k+9$ we use induction on $k$, and note that induction basis, $k=1$, was proven in Lemma~\ref{lem:G1}. 
For the inductive step note that each $G_k$ can be obtained from two copies of $G_{k-1}$ by adding five new vertices, and connect each of them to the two roots of the corresponding perfect binary trees.
As $G_k$ contains two distinct copies of (induced) subgraphs isomorphic to $G_{k-1}$, by induction hypothesis at least $2k+7$ colors is required for a packing coloring of each of the two copies. But since $\diam(G_k)\le 2k+6$, the colors $2k+6$ and $2k+7$ can be used in only one copy of $G_{k-1}$ in $G_k$, so in the other copy they need to be substituted by (two) additional colors. Therefore for a packing coloring of the entire graph $G_k$ at least $2k+9$ colors are required (actually $2k+9$ colors are required already for the packing coloring of vertices in the $2^{k}$ copies of $G_{0}$). 

\qed
\end{proof}

Since the graphs $G_k$ are clearly subcubic, Theorem~\ref{th:main} shows that the family $G_k$ has the property announced in the title of this note. 
Note that the graphs $G_k$ are not planar, therefore the following question still remains open.

\begin{question}
Is the packing chromatic number in the class of subcubic planar graphs bounded?
\end{question}

\section*{Acknowledgements}

B.B. acknowledges the financial support from the Slovenian Research Agency (research core funding No.\ P1-0297).


\end{document}